\shorttitle{An Efficient Method To Generate A Discrete Uniform Distribution Using A Biased Random Source} % insert short title here for use in running head
\newcommand{\probability}{\mathbb{P}}
\newcommand{\choosenm}[2]{\binom{#1}{#2}}
\newcommand{\algone}{\mathcal{A}_1}
\newcommand{\flips}[1]{\boldsymbol{X}^{#1}}
\newcommand{\algtwo}[1]{\mathcal{A}_2(#1)}
\newcommand{\id}{\operatorname{id}}
\newcommand{\algthree}[1]{\mathcal{A}_3(#1)}
\newcommand{\Shead}[1]{S_{\text{head}}(#1)}
\newcommand{\Nhead}[1]{N_{\text{head}}(#1)}
\begin{document}%\recd{}{}%Do not alter this line.

\title{An Efficient Method To Generate A Discrete Uniform Distribution Using A Biased Random Source}

\authorone[The University of Chicago]{Xiaoyu Lei} 
%Affiliation is just the name of your university or institution
%Please use the following format for addresses and emails. The APT office will sort this out after you submit your files.
\addressone{5747 South Ellis Avenue, Chicago, Illinois, USA} % Your postal address goes here.
\emailone{leixy@uchicago.edu} %Authors email goes here.

\begin{abstract}
% text of abstract goes here!
This article presents an efficient algorithm to generate a discrete uniform distribution on a set of $p$ elements using a biased random source for $p$ prime. The algorithm generalizes Von Neumann's method and improves computational efficiency of Dijkstra's method. In addition, the algorithm is extended to generate discrete uniform distribution on any finite set based on the prime factorization of integers. The time complexity of the proposed algorithm is overall sublinear $\operatorname{O}(n/\log n)$.
\end{abstract}

\keywords{random numbers; probability theory}%insert keywords separated by a semicolon. You should avoid including keywords which also appear in the title.

\ams{68W20}{68Q87}

\section{Background} % Initial capital letter, then lower case. No full stop.

% Write the text of your paper using normal LaTeX commands.
% For instance, you can use the `\cite' command~\cite{ref1}.
% When giving citations a numbering system is preferred~\cite{ref2},
% but an author--date system is also acceptable~\cite{ref3}.

Sampling a target distribution from a random physical source has many applications. However, the random physical sources are often biased with unknown distribution, while we need a specific target distribution in applications. Therefore, an efficient algorithm generating target distribution from a random source is of great value. \cite{von195113} firstly proposed a simple method to generate a fair binary distribution from an unfair binary source with an unknown bias. His method has served as a precursor of a series of algorithms generating a target distribution from an unknown random source.

\cite{hoeffding1994unbiased} and \cite{stout1984tree} improved Von Neumann's method to generate a fair binary distribution from a biased random source. From the view of probability theory, \cite{elias1972efficient} formally defined the kind of random procedure that can generate a target distribution. Elias also designed an infinite sequence of sampling schemes, with computational efficiency decreasing to the theoretical lower bound. Elias did not provide an executable algorithm for his method. Elias' method needs to generate Elias' function first. While such a preprocessing step needs an exponential space cost and at least a polynomial time cost \cite{pae2005random}. Thus Elias' method is computationally costly and inefficient.

\cite{dijkstra1990making} provided another method of generating uniform distribution on a set of $p$ elements for $p$ prime, while Dijkstra's method is computationally inefficient. Indeed, when realizing his method, we need a preprocessing step to generate and store a function which maps outcomes from the random source to some target values. However, such a preprocessing step needs an exponential time and space cost.

In this article, we propose a new algorithm based on the idea of Dijkstra's method. The proposed algorithm does not need a preprocessing step, thus computationally efficient.

This article is organized as follows: In Section \ref{sec-2}, we briefly recast Von Neumann's method as a starting point as well as a special case of our algorithm. In Section \ref{sec-3}, we heuristically construct and explain our algorithm. In Sections \ref{sec-4} and \ref{sec-5}, we formally propose our algorithms and theoretically verify them. In Section \ref{sec-5}, we prove that our algorithm has overall sublinear time complexity. Another novel proof of Theorem \ref{thm-3-1} is given in Appendix \ref{App-A}.

\section{Introduction to Von Neumann's Method}\label{sec-2}
Let $X\in \{ H, T \}$ denote the outcome of a biased coin flip with probability $a=\probability(X=H)\in (0,1)$ getting a head and probability $b=\probability(X=T)=1-a$ getting a tail. Let $\{X_i:i\ge 0\}$ be i.i.d. copies of $X.$ Von Neumann proposed an algorithm $\algone$ generating a fair binary random variable with distribution $\probability(\algone=0)=\probability(\algone=1)=1/2$ in the following way \cite{von195113}:
\begin{algorithm}[ht]
\caption{$\algone$: Von Neumann's Algorithm Generating Fair Binary Random Variable}
\hspace*{0.02in} {\bf Input:}
A sequence of flips from a biased coin $X$\\
\hspace*{0.02in} {\bf Output:}
Integer 0 or 1
\begin{algorithmic}[1]
\State Flip the coin twice
\State If the result is either HH or TT, then discard the two coin flips and return to step 1
\State If the result is HT, \Return $\algone=0$. If the result is TH, \Return $\algone=1$
\end{algorithmic}
\end{algorithm}

Let $\{ \boldsymbol{Y}_i = (X_{2i}, X_{2i+1}): i \ge 0 \}$ be i.i.d. outcomes of pairs of flips and $\tau$ be the first time such that $\boldsymbol{Y}_i \in \{ HT, TH \}$, then we will have
\begin{equation*}
    \probability(\algone=0)  = \probability(\boldsymbol{Y}_\tau = HT) =\frac{\probability(\boldsymbol{Y}_0 = HT)}{\probability(\boldsymbol{Y}_0\in \{ HT, TH \})}= \frac{\probability((X_0, X_1)=HT)}{\probability((X_0, X_1)\in \{ HT, TH \})}=\frac{1}{2}.
\end{equation*}

The derivation above shows $\algone$ generates a fair binary distribution. Below, we propose an efficient algorithm to generate a uniform distribution on $p$ elements for a prime $p$. At each cycle, we flip a coin $p$ times, the algorithm returns a number in $\{ 0, \cdots, p-1 \}$ except when the $p$ flips are all heads or all tails, analogous to Von Neumann's method.

\section{Heuristic Explanation for The Main Idea}\label{sec-3}
Let random vector
\begin{equation*}
    \flips{n}= ( X_0, \cdots, X_{n-1} )\in \{H, T \}^n
\end{equation*}
be the outcome of $n$ flips. Let $N_{\text{head}}(\flips{n})$ denote the  head count in $\flips{n}$, and $S_{\text{head}}(\flips{n})$ denote the rank sum of heads in $\flips{n}$, with ranks ranging from $0$ to $n-1$,
\begin{equation}\label{def_N_S}
    N_{\text{head}}(\flips{n})=\sum_{i=0}^{n-1}1_{\{X_i = H \}}\quad\text{and}\quad S_{\text{head}}(\flips{n})=\sum_{i=0}^{n-1} i\cdot 1_{\{X_i = H \}}.
\end{equation}
For example, when $\flips{5}=(H,H,T,H,T)$, we have $N_{\text{head}}(\flips{5})=3$ and $S_{\text{head}}(\flips{5})=4.$

For a specific sequence of $n$ flips $\boldsymbol{x}^n=(x_0,\cdots, x_{n-1})\in \{H, T \}^n$ as an observation of $\flips{n}$, if $N_{\text{head}}(\boldsymbol{x}^n)=\sum_{i=0}^{n-1}1_{\{x_i=H\}}=k$, then the probability of getting $\boldsymbol{x}^n$ in $n$ flips is
\begin{equation*}
    \probability(\flips{n} = \boldsymbol{x}^n) = \prod_{i=0}^{n-1}\probability(X_i = x_i)=a^k b^{n-k},
\end{equation*}
which only depends on the head count $k$. As a result, for $0\le k\le n$, there are exactly $\choosenm{n}{k}$ outcomes of $n$ flips containing $k$ heads, each with the same probability $a^k b^{n-k}$. Let
\begin{equation}\label{S_k}
    S_k = \left\{ A\subset \{0,1,\cdots, n-1\}:|A|=k \right\},
\end{equation}
where $|A|$ means the cardinality of set $A$. Thus $S_k$ is the set of all subsets of $\{ 0, \cdots, n-1 \}$ containing $k$ elements. Note that $|S_k|=\choosenm{n}{k}$ and each element in $S_k$ corresponds to one and only one outcome of $n$ flips with $k$ heads in the following way
\begin{equation}\label{1-1}
   \{i_1,\cdots, i_k\}\in S_k \quad\longleftrightarrow \quad \mathop{\cdots H\cdots H\cdots H\cdots}\limits_{i_1\hspace{7mm} i_2 \hspace{1mm}\cdots\hspace{2mm}  i_k},
\end{equation}
where each $i_t$ corresponds to the rank of an appearance of head in the $i_t$-th flip of $n$ flips, $i_1 < i_2 <\cdots <i_k$. As a result, we have the one-to-one correspondence below
\begin{equation}\label{1-1relation}
    S_k \quad\longleftrightarrow\quad \{ \boldsymbol{x}^n \in \{H,T\}^n :N_{\text{head}}(\boldsymbol{x}^n)=k \},
\end{equation}
and we also have
\begin{equation*}
    \probability(\flips{n} = \boldsymbol{x}^n)=a^{k}b^{n-k}, \quad \forall \, \boldsymbol{x}^n \in S_k.
\end{equation*}
Note for the correspondences \eqref{1-1} and \eqref{1-1relation}, we do not distinguish the left side and right side in the derivation below. And the equivalences will be frequently used in the following proof.

Inspired by Von Neumann's algorithm, we consider an algorithm generating a distribution on the set $\{ 0, \cdots, n-1 \}.$ At each cycle, we flip the coin $n$ times, then the algorithm returns a number in $\{ 0, \cdots, n-1 \}$ except when the outcome is all heads or all tails. Define sets $\{ A_m : 0\le m\le n-1\}$ to be a disjoint partition of $\bigsqcup_{1\le k\le n-1}S_k$,
\begin{equation*}
    \bigsqcup_{k=1}^{n-1}S_k = \bigsqcup_{m=0}^{n-1}A_m,
\end{equation*}
where $\bigsqcup$ means disjoint union.
The algorithm is formally stated below.
\begin{algorithm}[ht]
\caption{$\mathcal{A}$: Generating A Discrete Distribution on Set $\{0, \cdots, n-1 \}$}
\hspace*{0.02in} {\bf Input:}
A number $n$, a sequence of flips from a biased coin $X$\\
\hspace*{0.02in} {\bf Output:}
Integer in $\{0, \cdots, n-1 \}$
\begin{algorithmic}[1]
\State Flip the coin $n$ times, denote the outcome by $\flips{n}\in \{H,T\}^{n}$
\State If the result is either all heads or all tails, then discard the outcome and return to step 1
\State Else \Return $m$ when $\flips{n} \in A_m$
\end{algorithmic}
\end{algorithm}

Let $\{\boldsymbol{Y}_i = (X_{in}, \cdots, X_{in+n-1}): i\ge 0\}$ be i.i.d. outcomes of $n$ flips and $\tau$ be the first time $\boldsymbol{Y}_i$ is neither all heads nor all tails. Then for $0 \le m \le n-1$, we have
\begin{align}\label{3-1}
    \probability(\mathcal{A}=m)
    & = \probability(\boldsymbol{Y}_\tau \in A_m) \notag \\
    & = \frac{\probability(\flips{n} \in A_m )}{\probability (\flips{n} \in S_k \text{ for some } 1\le k\le n-1)} \notag \\
    & = \frac{\sum_{k=1}^{n-1} \probability (\flips{n} \in A_m\cap S_k) }{\sum_{k=1}^{n-1} \probability (\flips{n} \in S_k) } \notag \\
    & = \frac{\sum_{k=1}^{n-1} |A_m \cap S_k| a^k b^{n-k} }{\sum_{k=1}^{n-1} |S_k| a^k b^{n-k} }.
\end{align}

Let us consider a special case of the algorithm above, where $n$ is a prime $p.$ The reason for focusing on prime $p$ comes from the following fact in number theory,
\begin{equation*}
    p \Big| \choosenm{p}{k} = |S_k|, \quad \forall \, 1\le k\le p-1
\end{equation*}
where the symbol $|$ means ``divides". Then for each $k$, we can partition $S_k$ into disjoint $p$ parts of equal size. For $1\le k \le p-1,$ assume that the choice of sets $\{ A_m: 0 \le m \le p-1 \}$ satisfies
\begin{equation}\label{3-2}
    |A_0 \cap S_k| = \cdots = |A_{p-1} \cap S_k| = \frac{1}{p}|S_k|,
\end{equation}
where the disjoint $\{ A_m \cap S_k: 0\le m\le p-1 \}$ partition $S_k$ into $p$ subsets of equal size. Based on \eqref{3-1} and \eqref{3-2}, for $0 \le m\le p-1,$ we have
\begin{equation*}
    \probability(\mathcal{A}=m)= \frac{\sum_{k=1}^{p-1} |A_m \cap S_k| a^k b^{n-k} }{\sum_{k=1}^{p-1} |S_k| a^k b^{n-k} } = \frac{\sum_{k=1}^{p-1} \frac{1}{p}|S_k| a^k b^{n-k} }{\sum_{k=1}^{p-1} |S_k| a^k b^{n-k} } = \frac{1}{p},
\end{equation*}
which means the algorithm $\mathcal{A}$ returns a uniform distribution on $\{ 0, \cdots, p-1 \}.$

What remains is to find $\{ A_m: 0\le m\le p-1 \}$ satisfying \eqref{3-2}. We can always first partition $S_k$ into $p$ subsets of equal size, and then define $\{ A_m \cap S_k: 0\le m\le p-1 \}$ to be these subsets, like the proposed method in \cite{dijkstra1990making}. However, there exist two disadvantages of this method. First, everyone can have his way of partitioning $S_k$ into subsets of equal size, and there is no widely accepted standard. Second, partitioning $\{S_k:1\le k\le p-1\}$ and designing $\{ A_m:0\le m\le p-1 \}$ need excessive time and storage cost, because there are $2^p$ different outcomes of $p$ flips we need to handle, which grows exponentially as $p$ increases. A preprocessing step of exponential time is unacceptable for an efficient algorithm.

With the help of the modulo $p$ function, there exists an ingenious way of designing $\{ A_m:0\le m\le p-1 \}$ to satisfy \eqref{3-2}. Based on the correspondence \eqref{1-1}, for $0\le m\le p-1,$ indeed, we can choose
\begin{equation}\label{A_m}
    A_m = \left\{  \flips{p}: \Shead{\flips{p}} = m \mod{p}  \right\},
\end{equation}
as we will show in the next section.

\section{Generating Uniform Distribution on $p$ (Prime) Elements}\label{sec-4}
We give an algorithm generating discrete uniform distribution on the set $\{0, \cdots, p-1  \},$ where $p$ is a prime.
\begin{algorithm}[ht]\label{alg-2}
\caption{$\algtwo{p}$: Generating Discrete Uniform Distribution on Set $\{0, \cdots, p-1 \}$}
\hspace*{0.02in} {\bf Input:}
A prime number $p$, a sequence of flips from a biased coin $X$\\
\hspace*{0.02in} {\bf Output:}
Integer in $\{0, \cdots, p-1 \}$
\begin{algorithmic}[1]
\State Flip the coin $p$ times, denote the outcome by $\flips{p}\in \{H,T\}^{p}$
\State If the result is either all heads or all tails, then discard the outcome and return to step 1
\State Else \Return $\Shead{\flips{p}}\mod{p}$
\end{algorithmic}
\end{algorithm}

We need the following lemma before proving the main theory.
\begin{lemma}\label{lemma}
Let $p$ be a prime number, let $\{ S_k: 1\le k\le p-1 \}$ consist of all subsets of $\{ 0,\cdots, p-1 \}$ having $k$ elements. For fixed $k$, let $\{ S_k^m : 0\le m\le p-1 \}$ be defined by
\begin{equation}\label{S^m_k}
    S^m_k = \left\{\{i_1,\cdots, i_k\}\in S_k:\sum_{j=1}^k i_j = m \mod{p} \right\}.
\end{equation}
Note that $S^m_k = A_m \cap S_k$, where $A_m$ is defined in \eqref{A_m}.

Then we have
\begin{equation*}
    \left| S^m_k \right| = \frac{1}{p}\choosenm{p}{k}, \quad \forall \, 1\le k\le p-1, \,\forall \, 0\le m\le p-1.
\end{equation*}
\end{lemma}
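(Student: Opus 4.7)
The plan is to exhibit a free cyclic group action on $S_k$ that permutes the sum $\bmod\,p$ by a nonzero shift, and then use the orbit structure to count $S_k^m$.

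Define the cyclic shift $T:S_k\to S_k$ by
\begin{equation*}
    T\bigl(\{i_1,\ldots,i_k\}\bigr) = \bigl\{(i_1+1)\bmod p,\,\ldots,\,(i_k+1)\bmod p\bigr\}.
\end{equation*}
First I would check that $T$ is a bijection with $T^p=\id$, so $T$ generates an action of $\mathbb{Z}/p\mathbb{Z}$ on $S_k$, and that $T$ has no fixed points for $1\le k\le p-1$: a fixed $A$ must be closed under $i\mapsto i+1\bmod p$, which forces $A=\{0,1,\ldots,p-1\}$, a set of size $p\ne k$. Since orbit sizes divide $p$, every orbit must therefore have size exactly $p$.

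The key observation is the behavior of $T$ on the sum: if $\sum_j i_j\equiv s\pmod{p}$, then
\begin{equation*}
    \sum_{j=1}^k \bigl((i_j+1)\bmod p\bigr) \;\equiv\; \sum_{j=1}^k (i_j+1) \;\equiv\; s+k \pmod{p},
\end{equation*}
because replacing any coordinate by its reduction modulo $p$ only changes the total by a multiple of $p$. Since $p$ is prime and $1\le k\le p-1$, the element $k$ is a unit in $\mathbb{Z}/p\mathbb{Z}$, so the induced map $s\mapsto s+k$ on residues is a single $p$-cycle. Hence inside any $T$-orbit $\{A,T(A),\ldots,T^{p-1}(A)\}\subset S_k$, the $p$ residues $\Shead{\cdot}\bmod p$ realized are precisely $0,1,\ldots,p-1$, each once.

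Partitioning $S_k$ into its $T$-orbits, the number of elements of $S_k$ lying in $S_k^m$ is the number of orbits, independent of $m$, giving $|S_k^m|=|S_k|/p=\binom{p}{k}/p$. The main subtlety is simply to handle the sum modulo $p$ correctly across the reductions $(i_j+1)\bmod p$; aside from that, primality of $p$ is used twice, once to rule out fixed points (orbit sizes are $1$ or $p$) and once to guarantee that $k$ is invertible so that the sum shift is a full cycle.
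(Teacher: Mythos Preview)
Your proof is correct and follows essentially the same approach as the paper: both act on $S_k$ by the cyclic shift $\{i_j\}\mapsto\{(i_j+1)\bmod p\}$, argue that all orbits have size $p$, and show that the sum modulo $p$ runs through all residues within each orbit. Your fixed-point argument (a $T$-invariant nonempty subset must be all of $\{0,\ldots,p-1\}$) and your phrasing of the sum shift as ``$k$ is a unit, so $s\mapsto s+k$ is a $p$-cycle'' are slightly crisper than the paper's versions, but the underlying idea is identical.
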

\begin{proof}
For fixed $1\le k\le p-1$, consider a permutation on $S_k$ defined in the following way,
\begin{equation*}
f(\{i_1,\cdots, i_k\})=\{(i_1+1)\mod{p}, \cdots, (i_k+1)\mod{p}\}.
\end{equation*}
Denote $f^0$ to be the identity function $\id$. Let $\langle f\rangle$ be the subgroup generated by $f$. We need to show
\begin{equation*}
    \langle f\rangle = \{f^0=\id, f^1,\cdots f^{p-1}\}.
\end{equation*}
Since we know $f^{p}=\id$, we need to show $f^s\neq \id$ for $1\le s\le p-1.$

If $f^s= \id$ for some $1\le s\le p-1$, then we have
\begin{equation*}
    f^s(\{i_1,\cdots, i_k\})=\{(i_1+s)\mod{p}, \cdots, (i_k+s)\mod{p}\}=\{i_1,\cdots, i_k\},
\end{equation*}
from which we have
\begin{equation*}
    \sum_{j=1}^k(i_j+s) = \sum_{j=1}^k i_j \mod{p}.
\end{equation*}
The equality above shows $ p|ks$, which implies $p|k \text{ or } p|s$, leading to a contradiction since $1\le k,s\le p-1.$

Let group $\langle f\rangle$ act on $S_k.$ For $\{ i_1,\cdots,i_k \}\in S_k$, let $O_{\{ i_1,\cdots,i_k \}}$ denote the orbit of $\{ i_1,\cdots,i_k \}$ under group action
\begin{equation*}
    O_{\{ i_1,\cdots,i_k \}} = \{ \{ i_1^s,\cdots,i_k^s \}:=f^s(\{ i_1,\cdots,i_k \}) ,\text{ for } 0\le s\le p-1 \}.
\end{equation*}
The theory of group action tells us that $S_k$ can be divided to disjoint orbits with equal size $p$. In addition, for any $\{ i_1,\cdots,i_k \}\in S_k,$ when $s$ varies from $0$ to $p-1$,
\begin{equation*}
    \sum_{j=1}^k i^s_j \mod{p}
\end{equation*}
takes all values in $\{0,\cdots, p-1\}.$

If the claim above were not true, then there would exist $0\le s_1<s_2 \le p-1$ such that
\begin{equation*}
    \sum_{j=1}^k i^{s_1}_j = \sum_{j=1}^k i^{s_2}_j \mod{p}\quad  \Rightarrow\quad  \sum_{j=1}^k(i_j+s_1) = \sum_{j=1}^k (i_j+s_2) \mod{p}.
\end{equation*}
The equality above shows $p|k(s_2-s_1)$, which implies $p|k \text{ or } p|(s_2-s_1)$, leading to a contradiction since $1\le k,s_2-s_1\le p-1.$

The proof above shows that $S_k$ is a union of disjoint orbits of equal size $p.$ And in each orbit, for $0\le m\le p-1$, there exists one and only one element belonging to $S^m_k$, which means $\{ S_k^m : 0\le m\le p-1 \}$ partition $S_k$ into $p$ subsets with equal size and the proof is complete.

\end{proof}

The following is a special case to show the idea of the proof, with $p=7$ and $k=3$, the proof will process as the table shows.
\begin{figure}[H]
    \centering
    \includegraphics[width=12cm]{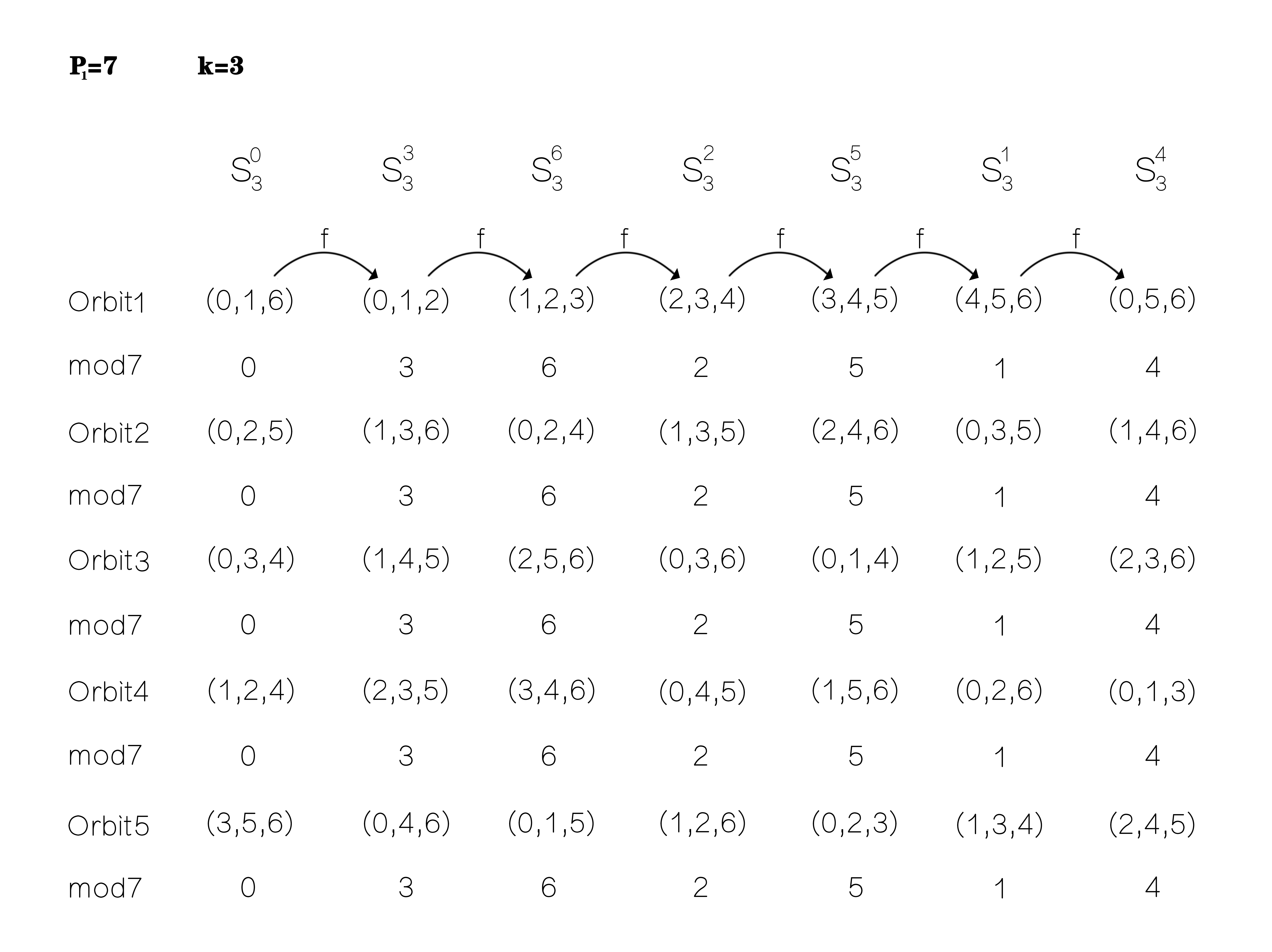}
    \caption{An example of the method in the proof}
\end{figure}

Next, we prove the main theorem on algorithm $\algtwo{p}.$
\begin{thm}\label{thm-3-1}
Let $X$ denote a biased coin with probability $a \in (0, 1)$ of getting a head and probability $b = 1 - a$
of getting a tail. For a prime $p$, $\algtwo{p}$ has the following properties:

(i) $\algtwo{p}$ terminates in finite number of flips with probability 1. The algorithm returns a uniform distribution on $\{0,\cdots,p-1\},$
\begin{equation*}
    \probability(\algtwo{p}=m)=\frac{1}{p}, \quad \forall\, 0\le m\le p-1.
\end{equation*}

(ii) The expected number of flips terminating $\algtwo{p}$ is
\begin{equation*}
    \frac{p}{1-a^{p}-b^{p}},
\end{equation*}
which means when $p$ is large, the time complexity approximates to the linear $\operatorname{O}(p).$

(iii) By letting $p=2$, $\algtwo{2}$ is exactly the Von Neumann's algorithm $\mathcal{A}_1.$
\end{thm}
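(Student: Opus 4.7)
The plan is to treat each block of $p$ consecutive flips as a single Bernoulli trial (``accept'' meaning the block is neither all heads nor all tails) and then invoke Lemma \ref{lemma} to pin down the conditional distribution. The heuristic derivation already produced formula \eqref{3-1} for an arbitrary partition $\{A_m\}$ of $\bigsqcup_{1 \le k \le p-1} S_k$, so once the combinatorial lemma is in hand the three claims follow with little effort; I expect the only subtlety is the bookkeeping that identifies $A_m \cap S_k$ with $S^m_k$ as defined in \eqref{S^m_k}.

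For part (i), I would first argue termination by observing that a single block of $p$ flips is non-constant with probability $1 - a^p - b^p$, which is strictly positive since $a \in (0,1)$. Letting $\boldsymbol{Y}_i$ denote the $i$-th block of $p$ flips and $\tau$ the first index at which $\boldsymbol{Y}_i$ is neither all heads nor all tails, $\tau + 1$ is geometric with parameter $1 - a^p - b^p$, so $\tau < \infty$ almost surely. For the uniformity claim, I would specialize $n = p$ in \eqref{3-1}, note that by \eqref{A_m} and \eqref{S^m_k} we have $A_m \cap S_k = S^m_k$, and apply Lemma \ref{lemma} to get $|A_m \cap S_k| = \choosenm{p}{k}/p$ for every $1 \le k \le p-1$. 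Substituting gives
\begin{equation*}
\probability(\algtwo{p} = m) = \frac{\sum_{k=1}^{p-1} \tfrac{1}{p}\choosenm{p}{k} a^k b^{p-k}}{\sum_{k=1}^{p-1} \choosenm{p}{k} a^k b^{p-k}} = \frac{1}{p},
\end{equation*}
proving uniformity. For part (ii), the total number of flips is $p(\tau + 1)$; since $\tau + 1$ is geometric on $\{1, 2, \ldots\}$ with success parameter $1 - a^p - b^p$, its expectation equals $p/(1 - a^p - b^p)$, and the near-linear behavior in $p$ follows because $a^p + b^p$ decays exponentially.

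Part (iii) reduces to a direct check: with $p = 2$, the non-constant outcomes are exactly $HT$ and $TH$, with $\Shead{HT} = 0$ and $\Shead{TH} = 1$, so step $3$ of $\algtwo{2}$ returns $0$ on $HT$ and $1$ on $TH$, which matches $\algone$ exactly. The main (already surmounted) obstacle of the whole theorem is Lemma \ref{lemma}; after that combinatorial identity is established, the present theorem collapses to a geometric-trials computation together with the small enumeration for $p = 2$.
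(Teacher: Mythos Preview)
Your proposal is correct and follows essentially the same route as the paper: both proofs reduce part~(i) to Lemma~\ref{lemma} via the identification $A_m\cap S_k=S_k^m$ and then plug into the ratio in \eqref{3-1}. The only cosmetic difference is in part~(ii), where you read off the expectation from the geometric law of $\tau+1$ while the paper sets up the first-step equation $E=p(1-a^p-b^p)+(p+E)(a^p+b^p)$; these are equivalent.
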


\begin{proof}
Let $\flips{p}=(X_0, \cdots, X_{p-1})$ be the outcome of $p$ flips of a biased coin, a random variable taking values in $\{H,T\}^{p}.$ Based on the correspondences \eqref{1-1} and \eqref{1-1relation}, and the definition of $S^m_k$ in \eqref{S^m_k}, each $\boldsymbol{x}^p \in \{H,T\}^{p}$ corresponds to one and only one element in $S^m_k$ by $\Nhead{\boldsymbol{x}^p}=k$ and $\Shead{\boldsymbol{x}^p}=m\mod{p}$ for some $k$ and $m$, where $\Nhead{\boldsymbol{x}^p}$ and $\Shead{\boldsymbol{x}^p}$ in \eqref{def_N_S} are the count and rank sum of heads respectively. Recall the definition of $S_k$ in \eqref{S_k}, then by Lemma \ref{lemma}, $\{ S_k^m : 0\le m\le p-1 \}$ partition $S_k$ into $p$ subsets with equal size.

Let $\{\boldsymbol{Y}_i = (X_{ip}, \cdots, X_{ip+p-1}): i\ge 0\}$ be i.i.d. outcomes of $p$ flips and $\tau$ be the first time $\boldsymbol{Y}_i$ is neither all heads nor all tails. Then for $0 \le m \le p-1$, we have
\begin{align*}
    \probability(\algtwo{p}=m)
    & = \probability(\Shead{\boldsymbol{Y}_\tau} = m \mod{p})  \\
    & = \probability(\Shead{\flips{p}} = m \mod{p} | \flips{p}\text{ is neither all heads nor all tails}) \\
    & = \frac{\probability(\Shead{\flips{p}}=m \mod{p}, N_{\text{head}}(\flips{p}) =k \text{ for some } 1\le k\le p-1)}{\probability(N_{\text{head}}(\flips{p}) =k \text{ for some } 1\le k\le p-1)} \\
    & = \frac{\sum_{k=1}^{p-1}\probability(\Shead{\flips{p}}=m \mod{p}, N_{\text{head}}(\flips{p}) =k)}{\sum_{k=1}^{p-1}\probability( N_{\text{head}}(\flips{p}) =k )} \\
    & = \frac{\sum_{k=1}^{p-1}|S_k^m|a^k b^{p-k}}{\sum_{k=1}^{p-1}|S_k|a^k b^{p-k}} \\
    & = \frac{1}{p},
\end{align*}
where the last identity is implied by the fact that $|S^m_k| = \frac{1}{p}\choosenm{p}{k}=\frac{1}{p}|S_k|$.

Let $E$ denote the expected number of flips terminating $\algtwo{p}$. Hence $E$ satisfies the following equation
\begin{equation*}
    E = p\probability(N_{\text{head}}(\flips{p}) =k \text{ for some } 1\le k\le p-1) + (p+E)\probability(\flips{p}\text{ is all heads or all tails}),
\end{equation*}
from which we have
\begin{equation*}
    E = \frac{p}{1-\probability(\flips{p}\text{ is all heads or all tails})}=\frac{p}{1-a^{p}-b^{p}}.
\end{equation*}
\end{proof}

We also came up with a creative and short proof for Theorem \ref{thm-3-1} (i) using random variables in residue class $\mathbb{Z}_{p}$ See Appendix \ref{App-A} for the new proof.

\section{Generating Uniform Distribution on $n$ Elements}\label{sec-5}
Denote $n$ to be any positive integer with prime factorization $n=\prod_{i=1}^s p_i^{t_i}.$ Let $\mathcal{M}$ be the set of all prime factors of $n$ considering multiplicity, which means $p_i$ appears $t_i$ times in $\mathcal{M}.$ The following algorithm $\algthree{n}$ generates discrete uniform distribution on the set $\{0,\cdots, n-1\}$ in an iterative way.

\begin{algorithm}[ht]\label{alg-3}
\caption{$\algthree{n}$: Generating Discrete Uniform Distribution on Set $\{0, \cdots, n - 1 \}$}
\hspace*{0.02in} {\bf Input:}
A sequence of flips, an integer $n$, a set $\mathcal{M}$ containing all prime factors of $n$, where each prime repeats as many times as its multiplicity in the decomposition of $n$\\
\hspace*{0.02in} {\bf Output:}
Integer in $\{0, \cdots, n-1 \}$
\begin{algorithmic}[1]
\State Set $r=0$
\While{$\mathcal{M}\neq \emptyset$}
\State Take a prime $p^{\prime}$ out of $\mathcal{M}$
\State $n = n/p^{\prime}$
\State Run $\algtwo{p^\prime}$, and let $t$ denote the return value
\State $r = r + t \cdot n$
\EndWhile
\State \Return r
\end{algorithmic}
\end{algorithm}

The following theorem shows the validity of algorithm $\algthree{n}$.

\begin{thm}\label{thm-5-1}
For any integer $n$, $\algthree{n}$ has the following properties:

(i) $\algthree{n}$ terminates in finite number of flips with probability 1. It returns a uniform distribution on $\{0, \cdots, n-1\}$
\begin{equation*}
    \probability(\algthree{n}=m)=\frac{1}{n}, \quad \forall \, 0\le m\le n-1.
\end{equation*}

(ii) When $n$ has prime factorization $\prod_{i=1}^s p_i^{t_i}$, the expected number of flips terminating $\algthree{n}$ is
\begin{equation*}
    \sum_{i=1}^s \frac{t_i p_i}{1-a^{p_i} - b^{p_i}}.
\end{equation*}
Therefore, the time complexity is approximately $\sum_{i=1}^s t_i p_i$ for large $n.$

(iii) The overall order of time complexity is $\operatorname{O}(n/\log(n)).$
\end{thm}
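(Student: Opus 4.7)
The plan is to peel off the while-loop of $\algthree{n}$ one iteration at a time and reduce the three claims to the guarantees of $\algtwo{p}$ from Theorem~\ref{thm-3-1}. Write the primes drawn from $\mathcal{M}$ in the order the loop consumes them as $q_1,q_2,\dots,q_L$, so that $L=\sum_{i=1}^s t_i$ and $\prod_{j=1}^L q_j=n$; let $N_j=\prod_{k=j+1}^L q_k$ be the value held in the variable $n$ after iteration $j$ (so $N_0=n$ and $N_L=1$), and let $T_j\in\{0,\dots,q_j-1\}$ be the value returned by the $j$-th invocation of $\algtwo{q_j}$. Because the coin flips form an i.i.d.\ sequence and each sub-call consumes a finite prefix of unused flips, Theorem~\ref{thm-3-1}(i) makes the $T_j$'s jointly independent with each $T_j$ uniform on $\{0,\dots,q_j-1\}$; almost-sure termination of the outer loop then follows from a.s.\ termination of the $L$ sub-calls.

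Unrolling the update $r\leftarrow r+T_j N_j$ gives $r=\sum_{j=1}^L T_j N_j$, which is precisely the mixed-radix expansion of $r$ in bases $(q_1,\dots,q_L)$. Hence $(T_1,\dots,T_L)\mapsto r$ is a bijection from $\prod_{j=1}^L\{0,\dots,q_j-1\}$ (a set of cardinality $n$) onto $\{0,\dots,n-1\}$, and independence and uniformity of the coordinates $T_j$ push forward to uniformity of $r$, proving (i). For (ii) I apply linearity of expectation to the flip-counts of the $L$ sub-calls together with Theorem~\ref{thm-3-1}(ii): the total expected number of flips is $\sum_{j=1}^L q_j/(1-a^{q_j}-b^{q_j})$, which regrouped by distinct primes is exactly $\sum_{i=1}^s t_i p_i/(1-a^{p_i}-b^{p_i})$.

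For (iii) I would first strip the denominator to a constant. For every prime $p\ge 2$ one has $a^p\le a^2$ and $b^p\le b^2$, hence $1-a^p-b^p\ge 1-a^2-b^2=2ab$; this yields the clean bound $E(n)\le(2ab)^{-1}\sum_{i=1}^s t_i p_i$. Since the right-hand side can be as large as $n$ (take $n$ itself prime), $\operatorname{O}(n/\log n)$ cannot be a pointwise statement; I would read ``overall'' as an average over $n\in\{1,\dots,N\}$ and prove
\[
\frac{1}{N}\sum_{n=1}^N \sum_{i} t_i\,p_i \,=\, \operatorname{O}\!\left(\frac{N}{\log N}\right)
\]
by swapping the order of summation into $\sum_{p\le N}p\sum_{n\le N}v_p(n)$, where $v_p(n)$ denotes the $p$-adic valuation, using the geometric-series bound $\sum_{n\le N}v_p(n)=\sum_{k\ge 1}\lfloor N/p^k\rfloor\le N/(p-1)\le 2N/p$, and invoking the prime number theorem $\pi(N)=\operatorname{O}(N/\log N)$ to conclude $\sum_{n\le N}\sum_i t_ip_i\le 2N\,\pi(N)=\operatorname{O}(N^2/\log N)$. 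The main obstacle is precisely this last step: one has to pin down what ``overall sublinear $\operatorname{O}(n/\log n)$'' should mean, because the worst-case reading fails already for $n$ prime; once the average-over-$n$ interpretation is adopted, what remains is the classical analytic-number-theory estimate for the sum-of-prime-factors function, and the denominator reduction $1-a^p-b^p\ge 2ab$ absorbs the bias into an $(a,b)$-dependent constant factor.
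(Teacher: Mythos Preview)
Your arguments for (i) and (ii) are the paper's own: the mixed-radix bijection $(T_1,\dots,T_L)\mapsto\sum_j T_jN_j$ and linearity of expectation over the sub-calls are exactly what the paper does (it first writes out the two-prime case $n=p_1p_2$ explicitly before passing to general $n$, but the content is identical).

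For (iii) you part ways with the paper in how ``overall'' is interpreted and how the number theory is supplied. The paper reads it as a density-$1$ statement and simply invokes Corollary~2.11 of \cite{jakimczuk2012sum}, namely
\[
\lim_{N\to\infty}\frac{1}{N}\left|\left\{2\le n\le N:\ c(n)<\frac{n}{\log^{1-\epsilon}n}\right\}\right|=1\qquad(0<\epsilon<1),
\]
with $c(n)=\sum_i t_ip_i$; no bound on the denominators $1-a^{p_i}-b^{p_i}$ is made. Your route instead averages over $n\le N$, first absorbing the bias into a constant via $1-a^p-b^p\ge 2ab$ and then proving $\sum_{n\le N}c(n)\le 2N\,\pi(N)=\operatorname{O}(N^2/\log N)$ by the valuation swap and the prime number theorem. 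What the paper's approach buys is a statement about \emph{individual} $n$ (almost every $n$ satisfies $c(n)<n/\log^{1-\epsilon}n$), at the cost of an external citation and the $\epsilon$-loss in the logarithm; what your approach buys is a self-contained argument with the sharper $\log n$ exponent, at the cost of controlling only the average rather than a density-$1$ set. Both readings of ``overall'' are defensible, and your computation is correct.
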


\begin{proof}
To show the claim (i), note that each outcome of $\algthree{n}$ corresponds to one and only one sequence of outcomes of $\algtwo{p_i}$. For this fact, first we consider a simplified case where $n=p_1p_2$ is a product of two prime numbers $p_1$ and $p_2$, and $p_1$ may equal $p_2$.

Given $n=p_1 p_2$, then $\mathcal{M} = \{p_1, p_2 \}$. Suppose we first get $p_1$ from $\mathcal{M}$ and then $p_2$. Then the outcomes $\algtwo{p_1}=m_1$ and $\algtwo{p_2}=m_2$ correspond to the outcome $\algthree{n}=m_1 p_2 + m_2$. Since $0\le m_1\le p_1-1$ and $0\le m_2\le p_2-1$, we have the range for $\algthree{n}$:
$$
0\le \algthree{n}\le (p_1-1)p_2 + p_2 -1 = n-1,
$$
which shows the fact $\algthree{n}\in \{0, \cdots, n-1\}.$ Note that for $0\le m\le n-1$, there exists one and only one pair of $(m_1, m_2)$ as
$$
\left(\left\lfloor \frac{m}{p_2} \right\rfloor, m-\left\lfloor \frac{m}{p_2} \right\rfloor p_2\right)
$$
satisfying the equation $m = m_1p_2+m_2\, (0\le m_1\le p_1-1, \, 0\le m_2\le p_2-1).$ So the outcome $\algthree{n} = m$ corresponds to the outcomes $\algtwo{p_1}=m_1$ and $\algtwo{p_2}=m_2$.

For the general case $n=\prod_{i=1}^s p_i^{t_i}$, based on the same method above, we conclude that for each $m$, there exists a unique set $\{ m_{p^\prime}: p^\prime\in \mathcal{M}\}$ such that the outcome $\algthree{n} = m$ corresponds to the outcomes $\algtwo{p^\prime}=m_{p^\prime}\,(p^\prime \in \mathcal{M})$. Therefore, the probability of $\algthree{n}=m$ is
\begin{equation*}
    \probability(\algthree{n}=m)=\prod_{p^\prime\in\mathcal{M}}\probability(\algtwo{p^\prime}=m_{p^\prime}) 
    = \prod_{i=1}^s \left(\frac{1}{p_i}\right)^{t_i}=\frac{1}{n}, \quad \forall \, 0\le m\le n-1.
\end{equation*}

To prove the claim (ii), note for $n=\prod_{i=1}^s p_i^{t_i},$ the set $\mathcal{M}$ contains each prime factor $p_i$ with $t_i$ times. By the iterative construction of $\algthree{n}$, we need to run $\algthree{p_i}$ once every time we pick $p_i$ from $\mathcal{M}$. Based on (ii) of Theorem \ref{thm-3-1}, the expected number of flips for $\algtwo{p_i}$ is $\frac{p_i}{1 - a^{p_i} - b^{p_i}}$, from which we conclude the expected number of flips terminating $\algthree{n}$ is
$$
    \sum_{i=1}^s \frac{t_i p_i}{1-a^{p_i} - b^{p_i}}.
$$

To analyze the time complexity of the algorithm $\algthree{n}$, define the function $c(n)=\sum_{i=1}^s t_i p_i$ to be the sum of prime factors of $n$ multiplied by their multiplicity, which is a good approximation to the time complexity of $\algthree{n}$ according to Theorem \ref{thm-5-1} (ii). We see that for prime numbers, the complexity is linear. For composite numbers, the complexity is sublinear. For $n=p_1^{t_1}$, since $c(n)=t_1 p_1$, the time complexity is almost $\log(n).$ We have the following theorem from number theory,
\begin{equation*}
    \lim_{N \to \infty}\left. \left|\left\{ 2\le n\le N: c(n)<\frac{n}{\log^{1-\epsilon}(n)}\right\}\right| \middle/ N = 1 \right. ,\quad \forall \, 0 < \epsilon < 1,
\end{equation*}
according to Corollary 2.11 of \cite{jakimczuk2012sum}. So we have an overall sublinear $\operatorname{O}(n/\log(n))$ complexity for the algorithm $\algthree{n}.$

\end{proof}

\begin{remnn}
In \cite{elias1972efficient}, another method generating  discrete uniform distribution on the set $\{0,\cdots ,n - 1\}$ was proposed. Elias' method needs Elias' function mapping outcomes of the random source to target values. However, unlike Theorem \ref{thm-5-1} (iii), the efficiency of Elias' method is defined by complicated mathematical formulas without analytic and concise form, which is hard to analyze theoretically. Besides, Elias' method suffers the same problem as Dijkstra's method mentioned in Section \ref{sec-3}. The computation of Elias' function, an essential preprocessing step of Elias' method, is computationally inefficient, and the storage of Elias' function is also an excessive space cost.
\end{remnn}

% If you write a theorem, lemma, proposition etc please use the
% appropriate environments. For instance:

%\begin{thm}[A theorem]
%Format theorems thus.\end{thm}    %Use {thm} for Theorems, {cor} for Corollaries,
                                  %{lem} for Lemmata, {prop} for Propositions, etc.
                                  %numbered within sections.

%\begin{ex}[An example]
%And format examples thus.\end{ex} %Use {rem} for a Remark, {rems} for Remarks, {defn}
                                  %for a definition, etc. numbered within sections.

                                  %If numbering theorems, etc. within sections,
                                  %uncomment the line in the preamble to number
                                  %equations within sections too.

%\begin{theorem}[Another theorem]  %Use {theorem} for Theorems, {corollary} for
%Numbered independently of         %Corollaries, {lemma} for Lemmata, {proposition} for
%the section. \end{theorem}        %Propositions, etc. numbered independently of
                                  %sections.

%\begin{definition}[A definition]  %Use {remark} for Remark, {Remarks} for
%Also numbered independently of    %Remarks, {definition} for Definition, etc.
%the section. \end{definition}     %numbered independently of sections.

                                  %For unnumbered Remarks, use {remnn}, {defnn}, etc.

% The proof of a result should go in the proof environment:
%\begin{proof}
%The proof goes here.
%\end{proof}

%If your paper includes appendices, then precede the first of them by the command
\appendix
%and then carry on using the \section and \subsection commands, as above.

\section{A New Proof for Theorem \ref{thm-3-1} (i)}\label{App-A}
Consider random variables taking values in $\mathbb{Z}_p=\{ \bar{0}, \cdots, \overline{p-1} \}$, where $\overline{i}$ represents the residual class of $i$ modulo $p$. Regard $\bar{0}$ as a tail and $\bar{1}$ as a head. Let $X$ denote the outcome of a flip satisfying $\probability(X=\overline{0})=a$ and $\probability(X=\overline{1})=b$. Let $X_0, \cdots, X_{p-1}$ be independent copies of $X$. Define $\flips{p}=(X_0,\cdots, X_{p-1})$ to be the outcome of $p$ flips. We then have the following two equivalences,
\begin{equation*}
    \flips{p}\text{ is all heads or all tails}\Longleftrightarrow
    X_i=\bar{0}\,( \forall 0\le i\le p-1)\text{ or }
    X_i=\bar{1}\,( \forall 0\le i\le p-1)
    \Longleftrightarrow \sum_{i=0}^{p-1}X_i=\bar{0},
\end{equation*}
and
\begin{equation*}
    S_{\text{head}}(\flips{p}) \mod p =m   \Longleftrightarrow \sum_{i=0}^{p-1}\overline{i}\cdot X_i = \overline{m}.
\end{equation*}

Also note for any permutation $\sigma$, we have
\begin{equation*}
    (X_0,\cdots, X_{p-1}) \overset{d}{=} (X_{\sigma(0)},\cdots, X_{\sigma(p-1)}),
\end{equation*}
since all $X_i$'s are i.i.d.. In the following, we let $\sigma$ denote the special permutation
\begin{equation*}
    \sigma = 
\left(
\begin{array}{ccccc}
    0 & 1 & \cdots & p-2 & p-1 \\
    1 & 2 & \cdots & p-1  & 0
\end{array}
\right).
\end{equation*}

For fixed $t\neq \bar{0}\in \mathbb{Z}_{p}$, we have
\begin{align*}
    \probability\left(\sum_{i=0}^{p-1}\overline{i}\cdot X_i=\bar{0} ,\quad \sum_{i=0}^{p-1}X_i=t\right)
    & = \probability\left(\sum_{i=0}^{p-1}\overline{i}\cdot X_i + \sum_{i=0}^{p-1}X_i=t ,\quad \sum_{i=0}^{p-1}X_i=t\right) \\
    & = \probability\left(\sum_{i=0}^{p-1}\overline{i+1}\cdot X_i=t ,\quad \sum_{i=0}^{p-1}X_i=t\right) \\
    & = \probability\left(\sum_{i=0}^{p-1}\overline{i+1}\cdot X_{\sigma(i)}=t ,\quad \sum_{i=0}^{p-1}X_{\sigma(i)}=t\right) \\
    & = \probability\left(\sum_{i=0}^{p-1}\overline{i}\cdot X_i=t ,\quad \sum_{i=0}^{p-1}X_i=t\right).
\end{align*}

Note any $t\neq \bar{0}$ can generate $\mathbb{Z}_{p}$. By iterating the derivation above, we have
$$
\probability\left(\sum_{i=0}^{p-1}\overline{i}\cdot X_i=k ,\quad \sum_{i=0}^{p-1}X_i= t \right) = 
\probability\left(\sum_{i=0}^{p-1}\overline{i}\cdot X_i=s ,\quad \sum_{i=0}^{p-1}X_i= t \right),\quad \forall\, k,s \in \mathbb{Z}_{p}.
$$

Summing over $t \neq \bar{0}$ on both sides of the above equation, we have for $k,s\in \mathbb{Z}_{p}$
\begin{align*}
\probability\left(\sum_{i=0}^{p-1}\overline{i}\cdot X_i=k ,\quad \sum_{i=0}^{p-1}X_i\neq \bar{0}\right)
& = \sum_{t\neq \bar{0}}\probability\left(\sum_{i=0}^{p-1}\overline{i}\cdot X_i=k ,\quad \sum_{i=0}^{p-1}X_i=t\right) \\
& =  \sum_{t\neq \bar{0}}\probability\left(\sum_{i=0}^{p-1}\overline{i}\cdot X_i=s ,\quad \sum_{i=0}^{p-1}X_i=t\right) \\
& = \probability\left(\sum_{i=0}^{p-1}\overline{i}\cdot X_i=s ,\quad \sum_{i=0}^{p-1}X_i\neq \bar{0}\right),
\end{align*}
which implies for $k,s\in \mathbb{Z}_{p},$
\begin{equation*}
    \probability\left(\sum_{i=0}^{p-1}\overline{i}\cdot X_i=k \left| \quad \sum_{i=0}^{p-1}X_i\neq \bar{0}\right.\right)
    = 
    \probability\left(\sum_{i=0}^{p-1}\overline{i}\cdot X_i=s \left| \quad \sum_{i=0}^{p-1}X_i\neq \bar{0}\right.\right).
\end{equation*}

The equality above is equal to the statement
\begin{align*}
    & \quad\,\, \probability(S_{\text{head}}(\flips{p})=k \mod p|\flips{p}\text{ is neither all heads nor all tails}) \\
    & =\probability(S_{\text{head}}(\flips{p})=s\mod p|\flips{p}\text{ is neither all heads nor all tails}), \quad \forall\, 0\le k,s\le p-1,
\end{align*}
as desired.

%If you include EPS (encapsulated postscript) figures in your paper,
%then please use the following commands:
%\begin{figure}
%\begin{center}
%\includegraphics{.eps}
%\caption{Caption text.}\label{}
%\end{center}
%\end{figure}

%%%%%%%%%%Declarations%%%%%%%%%%

\acks % Place the text of your acknowledgements after the \acks (or \Acks) command. This will generate the heading "Acknowledgements". If you wish to make only one acknowledgement, use \ack (or \Ack).
\noindent The author appreciates Prof. Mei Wang at UChicago for helpful discussions and advice. The author thanks Ph.D. candidate Haoyu Wei at UCSD for useful suggestions and kind support. The author also appreciates the editor of \textit{Journal of Applied Probability} and the two anonymous referees for their valuable comments and remarks.

\fund % Place any funding information for this work after the \fund (or \Fund) command.
\noindent There are no funding bodies to thank relating to this creation of this article.

\competing % Place any information on competing interests after the \competing (or \Competing) command.
\noindent There were no competing interests to declare which arose during the preparation or publication process of this article.

%%%%%%%%%%%%Reference list%%%%%%%%%%%%%%

\end{document}